\newtheorem{theorem}{Theorem}[section]
\newtheorem{lemma}[theorem]{Lemma}
\newtheorem{property}[theorem]{Property}
\newtheorem{assumption}{Assumption}[section]
\theoremstyle{definition}
\newtheorem{definition}[theorem]{Definition}
\newtheorem{example}[theorem]{Example}
\theoremstyle{remark}
\newtheorem{remark}[theorem]{Remark}
\numberwithin{equation}{section}
\begin{document}

\title{G-graded irreducibility and the index of reducibility}

\author{Cheng Meng}
\address{Department of Mathematics, Purdue University, 150 N. University Street, West Lafayette, IN 47907, USA}
\email{cheng319@purdue.edu}
\date{\today}

\subjclass[2010]{13A02,13C05}

\begin{abstract}
Let $R$ be a commutative Noetherian ring graded by a torsionfree abelian group $G$. We introduce the notion of $G$-graded irreducibility and prove that $G$-graded irreducibility is equivalent to irreducibility in the usual sense. This is a generalization of Chen and Kim's result in the $\mathbb{Z}$-graded case. We also discuss the concept of the index of reducibility and give an inequality for the indices of reducibility between any radical non-graded ideal and its largest graded subideal.
\end{abstract}
\keywords{Graded module, graded irreducibility, index of reducibility}

\maketitle

\section{introduction}

Let $R$ be a commutative Noetherian ring, $M$ an $R$-module, and $N$ an $R$-submodule of $M$. It's known that $N$ has an irreducible decomposition, that is, $N$ is an intersection of irreducible submodules in $M$. When $R, M, N$ are all graded with respect to a torsionfree abelian group $G$, we can talk about $G$-graded irreducible submodules of $M$ and irreducible decomposition of $N$ in $M$ in the category of $G$-graded modules. It's natural to ask whether these two irreducibilities are the same. More precisely, we want to know whether graded irreducibility implies irreducibility in the nongraded sense. It's well known that irreducibility implies being primary; in [1, IV.3.3.5] we know being graded primary is the same as being primary. Chen and Kim proved in [3] that the two irreducibilities are the same in the $\mathbb{Z}$-graded case. In this paper we extend this result to the case of any $G$-grading where $G$ is a torsionfree abelian group. In particular, as a consequence, a $G$-graded irreducible decomposition is an irreducible decomposition in the usual sense, and both indexes of reducibility, defined for G-grading and in the usual sense, will be the same. Finally we estimate the indexes of reducibility of a nongraded ideal and its largest graded subideal. We prove one inequality in the radical case and show by example that it fails in general case.

In all the sections below we make the following assumptions unless otherwise stated:

\begin{assumption}$R$ is a commutative Noetherian ring, $M$ is a finitely generated $R$-module. When we say $R$ and $M$ are graded without mentioning the group used for grading, we are assuming that they are $G$-graded for a torsionfree abelian group $G$. The identity element of $G$ is denoted by 0.
\end{assumption}
The reason for these assumptions are as follows.

When $R$ is Noetherian and $N \subset M$ are Noetherian modules, we have a finite irredundant irreducible decomposition for N. So the index of reducibility defined below will make sense.

The torsionfree property is essential. In fact, in the $G$-graded case where $G$ has torsion, the definition of prime ideals, primary ideals, associated primes will be different. An example is the group algebra $k[\mathbb{Z}_2]=k[x]/(x^2-1)$. The ideal 0 is not a prime ideal; however it's a graded prime in the sense that if two homogeneous elements multiply to get 0 then one of them is 0. Also the associated primes $(x+1),(x-1)$ are all nongraded, so here we need a different definition for graded associated primes. Such definitions can be found in [6]. In the torsionfree case, a graded prime ideal is just a prime ideal that is graded; and the same holds for graded primary submodules and graded associated prime ideals.

\section{preliminaries}
We recall the following standard definitions.
\begin{definition}Let $(G,+)$ be an abelian group. A ring $R$ is said to be $G$-graded if there is a family of additive subgroups $R_g$ such that $R=\oplus_{g \in G}R_g$ and $R_gR_h \subset R_{g+h}$ for any $g,h \in G$. For a $G$-graded ring $R$, an $R$-module $M$ is $G$-graded if there is a family of additive subgroups $M_g$ such that $M=\oplus_{g \in G}M_g$ and $R_gM_h \subset M_{g+h}$ for any $g,h \in G$.
\end{definition}
\begin{definition}Let $R$ be a Noetherian ring, $M$ an $R$-module, $N$ an $R$-submodule of $M$. Then
\begin{enumerate}[(1)]
\item The submodule $N$ is called irreducible if whenever $N_1,N_2$ are two submodule of $M$ satisfying $N_1 \cap N_2 = N$ we have $N_1=N$ or $N_2=N$.
\item Suppose moreover that $R, M, N$ are $G$-graded. Then $N \subset M$ is called $G$-graded-irreducible, or simply graded-irreducible when $G$ is clear, if whenever $N_1,N_2$ are two $G$-graded submodule of $M$ satisfying $N_1 \cap N_2 = N$ we have $N_1=N$ or $N_2=N$.
\item The submodule $N$ is called primary if $M/N$ has only one associated prime. If this prime is $\mathfrak{p}$ we say $N$ is $\mathfrak{p}$-primary. The set of associated primes of a module $M$ is denoted by $Ass(M)$.
\end{enumerate}
\end{definition}
The above definitions hold for $N \subset M$ if and only if they hold for $0 \subset M/N$.

The following property is well known.
\begin{property} An abelian group is torsionfree if and only if it can be totally ordered.
\end{property}
\begin{proof} The if part is trivial. Now for the converse, if $G$ is torsionfree, we can embed $G$ into some $\mathbb{Q}$-vector space, order the basis element, and give the lexicographic order on the vector space and restrict this order to $G$.
\end{proof}

So now we can equip each torsionfree abelian group with a total order. We have the following property.

\begin{property}Let $R$ be a graded ring satisfying (1.1). Then
\begin{enumerate}
\item Let $\mathfrak{p}$ be a graded proper ideal in $R$ such that if $f, g$ are homogeneous elements in $R$, $fg \in \mathfrak{p}$, then $f \in \mathfrak{p}$ or $g \in \mathfrak{p}$. Then $\mathfrak{p}$ is a prime ideal.

\item Let $M$ be a graded $R$-module, $N$ be a graded submodule of $M$. Then every associated prime of $M/N$ is graded and is the annihilator of a homogeneous element. In particular, $N \subset M$ is primary if and only if $M/N$ has only one associated $G$-graded prime.

\item If $N$,$M$ are as in (2), then there is a graded primary decomposition.

\item If $N$,$M$ are as in (2) and $N \subset M$ is graded irreducible, then $M/N$ is graded primary, hence primary with a unique graded associated prime.
\end{enumerate}
\end{property}
\begin{proof}For (1), See [7, A.II.1.4]. For (2), See [7, A.II.7.3] or [5, Prop 3.12]. For (3), See [7, A.II.7.11], or [5, Ex 3.5]. (4) is a corollary of (3).
\end{proof}

The following definition comes from [7, A.I.4] ,[2, definition 1.5.13], and [3] in the $\mathbb{Z}$-graded case. In [2], [3] "$G$-graded local" is called "*local".
\begin{definition}A \emph{$G$-graded maximal ideal} of $R$ is a $G$-graded ideal $\mathfrak{m}$ which is maximal with respect to inclusion in all $G$-graded ideals properly contained in $R$. A $G$-graded ring $R$ is called \emph{$G$-graded local} if it has a unique $G$-graded maximal ideal. A \emph{$G$-graded field} is a $G$-graded ring $k$ such that all the nonzero homogeneous elements in $k$ are invertible.
\end{definition}
\begin{remark}A $G$-graded ideal $\mathfrak{m}$ is a $G$-graded maximal ideal if and only if $R/\mathfrak{m}$ is a $G$-graded field. In particular, if $k \neq 0$ is a $G$-graded ring, then it's a graded field if and only if it has only two $G$-graded ideals, namely 0 and $k$, if and only if 0 is a $G$-graded maximal ideal of $k$.
\end{remark}
\begin{definition}Let $R$ be a $G$-graded ring. For an ideal $I \subset R$ which is not necessarily $G$-graded, as in [2] and [3], we define $I^*$ to be the ideal of $R$ generated by all the homogeneous elements in $I$.
\end{definition}
\begin{remark}Assume (1.1). Since $G$ is torsionfree, Property 2.4(1) yields that $\mathfrak{p}^*$ is a graded prime ideal contained in $\mathfrak{p}$ when $\mathfrak{p}$ is a prime ideal of R. In particular, every $G$-graded maximal ideal $\mathfrak{m}$ in $R$ is a prime ideal of $R$, because $\mathfrak{m}$ is contained in some (not necessarily graded) maximal ideal $\mathfrak{n}$, and it follows that $\mathfrak{m}=\mathfrak{n}^*$ by definition. So a graded field $k$ must be a domain. Therefore it makes sense to talk about the rank of a $k$-module if $k$ is a graded field.
\end{remark}
\begin{definition}Let $R$ be $G$-graded, $M$ a $G$-graded $R$-module, and $\mathfrak{p}$ a $G$-graded prime of $R$. \emph{The homogeneous localization of $M$ at $\mathfrak{p}$}, denoted by $M_{(\mathfrak{p})}$, is $W^{-1}M$ where $W$ is the multiplicative set of all homogeneous elements not in $\mathfrak{p}$.
\end{definition}
If $R$ is graded, $\mathfrak{p}$ is a graded prime of $R$, then $R_{(\mathfrak{p})}$ is graded local.
\begin{lemma}Let $R$ be a Noetherian ring, $\mathfrak{p}$ a prime ideal of $R$, $M$ a finitely generated $R$-module, and $N$ an $R$-submodule of $M$ which is $\mathfrak{p}$-primary in $M$. Then $N$ is irreducible in $M$ if and only if $N_{\mathfrak{p}}$ is irreducible in $M_{\mathfrak{p}}$. Moreover, if $R,M,N,\mathfrak{p}$ are all $G$-graded, then $N$ is graded-irreducible in $M$ if and only if $N_{(\mathfrak{p})}$ is graded-irreducible in $M_{(\mathfrak{p})}$.
\end{lemma}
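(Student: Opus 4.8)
The plan is to pass to $M/N$ and reduce to the case $N=0$. Since (graded-)irreducibility of $N$ in $M$ is equivalent to that of $0$ in $M/N$, since $N$ being $\mathfrak p$-primary in $M$ means precisely that $\mathrm{Ass}(M/N)=\{\mathfrak p\}$, and since localization (resp.\ homogeneous localization) is exact so that $N_{\mathfrak p}\subset M_{\mathfrak p}$ (resp.\ $N_{(\mathfrak p)}\subset M_{(\mathfrak p)}$) becomes $0\subset (M/N)_{\mathfrak p}$ (resp.\ $0\subset (M/N)_{(\mathfrak p)}$), we may replace $M$ by $M/N$ and assume $\mathrm{Ass}(M)=\{\mathfrak p\}$ and $N=0$. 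The goal is then to prove that $0$ is (graded-)irreducible in $M$ if and only if $0$ is (graded-)irreducible in $M_{\mathfrak p}$ (resp.\ $M_{(\mathfrak p)}$).

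The crucial preliminary observation is that the localization map $M\to M_{\mathfrak p}$ is injective: the set of zerodivisors on $M$ is the union of the associated primes of $M$, which here is exactly $\mathfrak p$, so every element of $R\setminus\mathfrak p$ acts injectively on $M$, and $M$ may be regarded as an $R$-submodule of $M_{\mathfrak p}$. I would record two further facts about this inclusion: (a) for an $R_{\mathfrak p}$-submodule $L$ of $M_{\mathfrak p}$ one has $(L\cap M)_{\mathfrak p}=L$, because localization commutes with the finite intersection $L\cap M$ (formed inside $M_{\mathfrak p}$) and $L$, being already an $R_{\mathfrak p}$-module, equals its own localization; and (b) for a nonzero $R$-submodule $A$ of $M$ one has $A_{\mathfrak p}\ne 0$, since $A$ is finitely generated with $\emptyset\ne\mathrm{Ass}(A)\subseteq\mathrm{Ass}(M)=\{\mathfrak p\}$, hence $\mathfrak p\in\mathrm{Supp}(A)$.

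Both implications then follow quickly. If $0$ is irreducible in $M$ but $0=A\cap B$ with $A,B$ nonzero $R_{\mathfrak p}$-submodules of $M_{\mathfrak p}$, then $A\cap M$ and $B\cap M$ are $R$-submodules of $M$ with $(A\cap M)\cap(B\cap M)=(A\cap B)\cap M=0$; by (a) their localizations are $A$ and $B$, so they are nonzero since $M\hookrightarrow M_{\mathfrak p}$, contradicting irreducibility of $0$ in $M$. Conversely, if $0$ is irreducible in $M_{\mathfrak p}$ but $0=A\cap B$ with $A,B$ nonzero $R$-submodules of $M$, then localizing gives $0=A_{\mathfrak p}\cap B_{\mathfrak p}$ with $A_{\mathfrak p},B_{\mathfrak p}$ nonzero by (b), a contradiction.

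For the graded part I would run the identical argument with $M_{(\mathfrak p)}$ and graded submodules in place of $M_{\mathfrak p}$ and arbitrary submodules. Here $\mathfrak p$ is a graded prime, Property 2.4(2) gives that the associated primes of the graded module $M$ are still just $\mathfrak p$, the map $M\to M_{(\mathfrak p)}$ is a degree-preserving injection (so that the intersection with $M$ of a graded submodule of $M_{(\mathfrak p)}$ is again graded), and homogeneous localization is exact, yielding the graded analogue of (a). For the analogue of (b), given a nonzero graded submodule $A$ of $M$ we have $\mathrm{Ass}(A)=\{\mathfrak p\}$, and by Property 2.4(2) $\mathfrak p=\mathrm{ann}(a)$ for some homogeneous $a\in A$; then $a$ has nonzero image in $A_{(\mathfrak p)}$, for $wa=0$ with $w$ homogeneous and $w\notin\mathfrak p$ would force $w\in\mathfrak p$. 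I expect the only real work to be the routine verification that homogeneous localization behaves like ordinary localization—exactness, commuting with finite intersections, and fixing modules that are already modules over $R_{(\mathfrak p)}$—rather than anything in the logical skeleton of the argument.
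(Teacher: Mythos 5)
Your proof is correct. The paper does not actually prove this lemma; it simply cites [3, Lemma 2] and remarks that the $\mathbb{Z}$-graded argument carries over to the $G$-graded case. Your argument---reducing to $N=0$, using $\mathrm{Ass}(M/N)=\{\mathfrak p\}$ to get injectivity of $M\to M_{\mathfrak p}$ (resp.\ $M\to M_{(\mathfrak p)}$), and then running the standard extension--contraction correspondence $L\mapsto L\cap M$, $A\mapsto A_{\mathfrak p}$ together with the fact that localization commutes with finite intersections---is precisely the standard proof underlying that citation, and your graded version (homogeneous elements $m/w$, Property 2.4(2) to produce a homogeneous element with annihilator $\mathfrak p$) correctly supplies the details of the adaptation the paper leaves implicit.
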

\begin{proof}See [3, Lemma 2]. The proof is for the $\mathbb{Z}$-graded case but can be applied to the $G$-graded case.
\end{proof}

\section{The structure of modules over graded fields}
It's well known that if $k$ is a field, then every vector space over $k$ is free. Here we prove a similar result when $k$ is a graded field.
\begin{definition}Let $R$ be a $G$-graded ring. We define the support of $R$, denoted by $Supp(R)$, to be $\{g \in G :R_g \neq 0\}$.
\end{definition}
If $R$ is a domain, then $Supp(R)$ is a subsemigroup of G. If $k$ is a graded field, then $Supp(k)$ is a subgroup of G.

The following two theorems have more general versions using the notions of strongly graded rings and graded division rings, see [7, A.I.3 and A.I.4]. We present explicit proofs in our particular case.

\begin{theorem}Let $G$ be a group, and $k$ a $G$-graded field. Then $k_0$ is a field, and $k_g$ $\cong$ $k_0$ as a $k_0$-vector space for any $g \in Supp(k)$.
\end{theorem}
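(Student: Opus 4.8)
The plan is to establish the field statement first and then exhibit, for each $g \in Supp(k)$, an explicit $k_0$-linear isomorphism $k_0 \to k_g$ given by multiplication by a fixed nonzero homogeneous element of degree $g$.

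First I would recall the standard fact that the identity $1$ of $k$ lies in $k_0$: writing $1 = \sum_{h} e_h$ with $e_h \in k_h$ and comparing the homogeneous components of $x = x \cdot 1$ for homogeneous $x$ shows that $e_0$ acts as an identity, so $1 = e_0 \in k_0$ and $k_0$ is a subring of $k$. To see $k_0$ is a field, take a nonzero $a \in k_0$; it is homogeneous, hence a unit in $k$, and writing $a^{-1} = \sum_h b_h$ with $b_h \in k_h$ and comparing homogeneous components of $1 = a a^{-1} = \sum_h a b_h$ (noting $a b_h \in k_h$) forces $a b_0 = 1$ and $a b_h = 0$ for $h \neq 0$. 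Thus $b_0 \in k_0$ is an inverse of $a$, so $k_0$ is a field. (Here $k \neq 0$, so $k_0 \neq 0$.)

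Next, fix $g \in Supp(k)$ and choose any nonzero $t \in k_g$. Since $t$ is homogeneous and nonzero it is a unit in $k$, and the same homogeneous-component comparison applied to $1 = t t^{-1}$, using that $t$ being a unit gives $ty = 0 \Rightarrow y = 0$, shows $t^{-1} \in k_{-g}$. Consider $\varphi \colon k_0 \to k_g$, $\varphi(a) = at$. It is $k_0$-linear and has image in $k_g$ because $k_0 k_g \subset k_g$; it is injective since $at = 0$ implies $a = a t t^{-1} = 0$; and it is surjective since for $b \in k_g$ one has $b t^{-1} \in k_{-g} k_g \subset k_0$ with $\varphi(b t^{-1}) = b$. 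Hence $\varphi$ is an isomorphism of $k_0$-vector spaces and $k_g \cong k_0$.

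I do not expect a genuine obstacle in this argument; the only points requiring care are the bookkeeping with homogeneous decompositions, in particular that the inverse of a nonzero homogeneous element of degree $g$ is again homogeneous of degree $-g$, and checking that the scaling map lands exactly in $k_g$ and is surjective onto it. Note that torsionfreeness of $G$ plays no role here, and that the fact that $k$ is a domain (Remark 2.9) is subsumed by the stronger property that every nonzero homogeneous element of $k$ is a unit.
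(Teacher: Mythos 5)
Your proof is correct and follows essentially the same route as the paper: the paper likewise notes that the inverse of a nonzero element of $k_0$ again lies in $k_0$, and then exhibits multiplication by a fixed nonzero $u \in k_g$ as a $k_0$-isomorphism $k_0 \to k_g$ with inverse given by multiplication by $u^{-1}$. Your write-up merely makes explicit the homogeneous-component bookkeeping (e.g.\ that $u^{-1} \in k_{-g}$) that the paper leaves implicit.
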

\begin{proof}Every nonzero element in $k_0$ has an inverse in $k_0$, so $k_0$ is a field. Take any nonzero $u$ $\in k_g$. Then the multiplication by $u$ is a $k_0$ isomorphism of $k_0 \to k_g$ with an inverse which is the multiplication by $u^{-1}$.
\end{proof}
\begin{theorem}Let $G$ be a torsionfree abelian group, and $k$ a $G$-graded field. Then any $G$-graded $k$-module $M$ is free over $k$.
\end{theorem}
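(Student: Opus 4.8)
The plan is to reduce the statement to the two elementary facts provided by Theorem 3.2: that $k_0$ is an ordinary field, and that every vector space over it has a basis. The idea is to slice $M$ along the cosets of $Supp(k)$. Write $H=Supp(k)$; as noted after Definition 3.1, $H$ is a subgroup of $G$ — concretely, if $u\in k_g$ is nonzero it is a unit, hence a non-zero-divisor, so comparing degrees in $uu^{-1}=1$ forces $u^{-1}\in k_{-g}$, and a like degree count shows $k_gk_{g'}\neq 0$ whenever $k_g,k_{g'}\neq 0$. For each coset $C\in G/H$ set $M^{(C)}=\bigoplus_{g\in C}M_g$. Since $k_hM_g\subseteq M_{g+h}$ and $C+h=C$ for $h\in H$, each $M^{(C)}$ is a graded $k$-submodule, and $M=\bigoplus_{C\in G/H}M^{(C)}$ as graded $k$-modules. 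A disjoint union of free bases of the $M^{(C)}$ is then a free basis of $M$, so it is enough to handle a single coset.

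Fix $C$ and a representative $g_0\in C$. By Theorem 3.2, $k_0$ is a field, so the $k_0$-vector space $M_{g_0}$ has a basis $\{e_i\}_{i\in I}$ (empty if $M_{g_0}=0$), and I claim it is a free $k$-basis of $M^{(C)}$. The fact used throughout is that for $g=g_0+h\in C$ and any nonzero $u\in k_h$, multiplication by $u$ is a $k_0$-linear isomorphism from $M_{g_0}$ onto $M_g$ with inverse multiplication by $u^{-1}\in k_{-h}$. For spanning, since $M^{(C)}$ is the sum of its homogeneous pieces $M_g$ with $g\in C$, it suffices to expand a homogeneous $m\in M_g$: with $g=g_0+h$ and $0\neq u\in k_h$, we have $u^{-1}m\in M_{g_0}$, say $u^{-1}m=\sum_i c_ie_i$ with $c_i\in k_0$, whence $m=\sum_i(uc_i)e_i$. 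For independence, suppose $\sum_i a_ie_i=0$ with $a_i\in k$; decomposing $a_i=\sum_{h\in H}a_{i,h}$ and collecting the terms of degree $g_0+h$ (these lie in the distinct summands $M_{g_0+h}$ of $M$) yields $\sum_i a_{i,h}e_i=0$ for every $h\in H$. If some $a_{i,h}\neq 0$ then $k_h\neq 0$, and multiplying that relation by $u^{-1}\in k_{-h}$ for a nonzero $u\in k_h$ gives the $k_0$-linear relation $\sum_i(u^{-1}a_{i,h})e_i=0$, so $u^{-1}a_{i,h}=0$ and then $a_{i,h}=u(u^{-1}a_{i,h})=0$, a contradiction. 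Hence all $a_i=0$, so $M^{(C)}$ is free and the theorem follows.

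I do not expect a serious obstacle; the proof is elementary, and the two places that need care are (i) verifying that $Supp(k)$ is a subgroup and that the coset decomposition of $M$ respects the $k$-action, so that the reduction to one coset is legitimate, and (ii) in the independence step, descending from a relation with arbitrary coefficients in $k$ to homogeneous ones by taking graded components and then cancelling a homogeneous coefficient against its inverse, which crucially sits in exactly the degree needed to land back in $M_{g_0}$. Incidentally, this argument uses neither finite generation of $M$ nor the total order on $G$ from Property 2.3; one could alternatively imitate the Zorn's lemma proof that vector spaces have bases, working with a maximal $k$-linearly independent set of homogeneous elements and invoking the total order to organize the bookkeeping, but the coset reduction seems the cleaner route.
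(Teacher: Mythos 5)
Your proof is correct and follows essentially the same route as the paper: decompose $M$ along the cosets of $Supp(k)$ in $G$, take a $k_0$-basis of the component in a chosen representative degree, and use invertibility of nonzero homogeneous elements of $k$ to show it is a free $k$-basis of that coset piece. You simply spell out the spanning and independence checks (and the fact that $Supp(k)$ is a subgroup) that the paper's proof leaves implicit.
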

\begin{proof}Let $G'=Supp(k)$. Let $S$ be a set of representatives in $G$ of all the cosets in $G/G'$. Then $M = \oplus (M_g)_{g \in G} \cong \oplus_{s \in S } \oplus( (M_{sh})_{h \in G'})$ as $k$-module. So after shifting it suffices to prove $\oplus (M_h)_{h \in G'}$ is a free $k_0$-module for any s and any graded $k$-module $M$. Now $M_0$ is a $k_0$-vector space. Let $\{e_i, i \in I\}$ be a basis for $M_0$ and choose a basis $u_g$ in $k_g$ for each degree $g \in G$. Then in each degree, $u_g*e_i$ is a $k_0$-basis for $M_g$. This means that $M = \oplus_{i \in I} ke_i$. Hence $M$ is a free $k$-module.
\end{proof}
We want to restrict to the case where G is finitely generated using the Noetherian condition. We can do this in the case of the group algebra.
\begin{theorem}Let $G$ be a finite generated torsionfree abelian group, say $\mathbb{Z}^n$. Then every $G$-graded field k is isomorphic to $k_0[G']$ as a graded field, where $G'$ is the support of k.
\end{theorem}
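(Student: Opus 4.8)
\emph{Proof proposal.} The plan is to construct an explicit isomorphism of $G$-graded rings $\phi\colon k_0[G'] \to k$ and then check that it is bijective degree by degree. First I would use the finite-generation hypothesis: $G' = Supp(k)$ is a subgroup of $G = \mathbb{Z}^n$ (as already observed for graded fields), and every subgroup of $\mathbb{Z}^n$ is free abelian of finite rank $m \le n$. This is the only place the hypothesis is really needed — a general torsionfree $G'$ need not admit a basis. Fix a $\mathbb{Z}$-basis $g_1,\dots,g_m$ of $G'$, and for each $i$ fix a nonzero homogeneous element $u_i \in k_{g_i}$; since $k$ is a graded field each $u_i$ is a unit with $u_i^{-1} \in k_{-g_i}$.

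Next, for $g = \sum_i a_i g_i \in G'$ (the integers $a_i$ being uniquely determined by $g$), set $u^g := \prod_i u_i^{a_i}$, a unit lying in $k_g$, and define $\phi\bigl(\sum_{g \in G'} c_g X^g\bigr) = \sum_{g \in G'} c_g u^g$ for $c_g \in k_0$, where $\{X^g : g \in G'\}$ is the standard $k_0$-basis of the group algebra $k_0[G']$. Commutativity of $k$ gives $u^{g+h} = u^g u^h$, so $\phi$ is a ring homomorphism; it is clearly $k_0$-linear and sends the degree-$g$ component $k_0 X^g$ into $k_g$, hence it is a homomorphism of $G$-graded $k_0$-algebras restricting to the identity on $k_0$.

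Then I would verify bijectivity. For injectivity: $\phi$ is graded, so $\phi(\sum c_g X^g) = 0$ forces $c_g u^g = 0$ in $k_g$ for every $g$, and since $u^g$ is a unit this gives $c_g = 0$. For surjectivity: $k_g = 0$ unless $g \in G'$ by definition of $G'$, and for $g \in G'$ any $v \in k_g$ satisfies $v u^{-g} \in k_0$ (it has degree $0$), so $v = \phi\bigl((v u^{-g})\, X^g\bigr)$; thus the image contains every homogeneous element of $k$ and hence all of $k$. Therefore $\phi$ is an isomorphism of $G$-graded rings, and in particular of graded fields, with support $G'$ on both sides.

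I do not expect a serious obstacle: beyond this bookkeeping the only inputs are the structural fact that subgroups of $\mathbb{Z}^n$ are free of finite rank (so that a basis $g_1,\dots,g_m$ exists) and Theorem 3.2, which tells us $k_0$ is a field and each $k_g$ with $g \in G'$ is a rank-one $k_0$-space; the remaining checks (well-definedness of $u^g$, multiplicativity, and the two one-line bijectivity arguments) are routine once commutativity of $k$ is used to make $u^g$ independent of the order of the factors.
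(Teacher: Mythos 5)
Your proposal is correct and follows essentially the same route as the paper: choose a $\mathbb{Z}$-basis of the (necessarily free, finite-rank) subgroup $G'$, pick nonzero homogeneous units in the corresponding degrees, and use that each $k_g$ is a rank-one $k_0$-space to identify $k$ with $k_0[G']$. You merely spell out the resulting isomorphism and its bijectivity in more detail than the paper's terse verification that $k_h=k_0\,a_1^{n_1}\cdots a_m^{n_m}$.
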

\begin{proof} Since $G'$ is a subgroup of $G$, it is still a finitely generated torsionfree abelian group, say $\oplus_{i=1}^m \mathbb{Z}e_i$.For each i take a nonzero element $a_i$ in $k_{e_i}$. Then for any $h=n_1e_1+n_2e_2+\cdots+n_me_m \in G'$, $a_1^{n_1}a_2^{n_2}\cdots a_m^{n_m}$ is a nonzero element in $k_h$, thus $k_h=k_0*a_1^{n_1}a_2^{n_2}\cdots a_m^{n_m}$. This means that $k=k_0[a_1,a_2,...,a_m] \cong k_0[G']$.
\end{proof}
\begin{remark}The conclusion of the theorem above is not true in general for torsionfree abelian groups which are not finitely generated. In fact, we have to find a basis for all the nonzero components of the graded field or graded module; and there is no guarantee that one can find a collection of bases, labeled by the group, that is closed under multiplication if the group is not finitely generated. There are different isomorphic classes of such graded fields corresponding to the cohomology classes in $H^2(G',k_0^*)$; see [8, Ex 1.5.10].
\end{remark}
From the following theorem we see that if $k$ is a Noetherian group algebra, then its support is finitely generated. So we may assume $G$ is finitely generated in this case.
\begin{theorem}Let $k=k_0[G]$ be a group algebra over a field $k_0$, $G$ any abelian group, not necessarily torsionfree. Then $k$ is Noetherian if and only if $G$ is finitely generated.
\end{theorem}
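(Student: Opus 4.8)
The plan is to establish the two implications separately, as the two directions are essentially unrelated.

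For the ``if'' direction I would present $k_0[G]$ as a quotient of a Laurent polynomial ring. If $G$ is generated by $g_1,\dots,g_n$, then sending $y_i \mapsto x_{g_i}$ extends to a $k_0$-algebra homomorphism $k_0[y_1^{\pm 1},\dots,y_n^{\pm 1}] \to k_0[G]$, and it is surjective because every basis vector $x_g$ with $g=\sum_i a_i g_i$ equals the monomial $\prod_i x_{g_i}^{a_i}$. The Laurent polynomial ring is the localization of $k_0[y_1,\dots,y_n]$ at $y_1\cdots y_n$, hence Noetherian by the Hilbert basis theorem together with the fact that a localization of a Noetherian ring is Noetherian; a quotient of a Noetherian ring is Noetherian, so $k_0[G]$ is Noetherian. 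This direction is routine.

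For the ``only if'' direction I would argue by contraposition: assuming $G$ is not finitely generated, I construct a strictly ascending chain of ideals. Choose $g_1,g_2,\dots \in G$ with $g_{m+1}\notin G_m := \langle g_1,\dots,g_m\rangle$, which is possible precisely because $G$ is not finitely generated, so that $G_1 \subsetneq G_2 \subsetneq \cdots$. Let $I_m \subseteq k_0[G]$ be the ideal generated by $\{x_g-1 : g \in G_m\}$; then $I_m \subseteq I_{m+1}$. The key point is that the map $k_0[G] \to k_0[G/G_m]$, $x_g \mapsto x_{g+G_m}$, is surjective with kernel exactly $I_m$: modulo $I_m$ the element $x_g$ depends only on the coset $g+G_m$, which gives a two-sided inverse of the induced map $k_0[G]/I_m \to k_0[G/G_m]$. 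Hence, for $g \in G_{m+1}\setminus G_m$, the element $x_g-1$ lies in $I_{m+1}$ but not in $I_m$, since its image $x_{g+G_m}-1$ in $k_0[G/G_m]$ is nonzero (distinct group elements form a $k_0$-basis of a group algebra, so $x_{g+G_m}\neq 1$). Thus $I_1 \subsetneq I_2 \subsetneq \cdots$ and $k_0[G]$ is not Noetherian.

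The only place needing care --- more bookkeeping than real difficulty --- is the identification $k_0[G]/I_m \cong k_0[G/G_m]$ and the resulting strictness of the chain; this is exactly where I use that $G$ is abelian (so $G_m$ is normal and the quotient group algebra makes sense) and that passing to $G/G_m$ does not collapse the coset of an element $g\notin G_m$. No use is made of torsionfreeness here, consistent with the statement allowing arbitrary abelian $G$.
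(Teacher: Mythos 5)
Your proof is correct, and the ``if'' direction is essentially the paper's (group algebra of a finitely generated abelian group as a quotient of a Laurent polynomial ring, i.e.\ a localization of a polynomial ring). For the ``only if'' direction, however, you take a genuinely different route from the paper. The paper fixes a single ideal, the augmentation ideal $J=\ker(k_0[G]\to k_0)$, and shows it is not finitely generated by a grading argument: any finitely generated ideal is homogeneous for the coarser $G/H$-grading with $H$ a finitely generated subgroup (the one generated by the degrees occurring in the generators), whereas $J$ is $G/H$-graded only when $H=G$, since for $g\notin H$ the $G/H$-homogeneous components of $e_g-1$ would force $1\in J$. You instead exhibit an explicit strictly ascending chain of ``partial'' augmentation ideals $I_m=(x_g-1 : g\in G_m)$ attached to a strictly increasing chain of finitely generated subgroups $G_m$, with strictness certified by the identification $k_0[G]/I_m\cong k_0[G/G_m]$ (whose verification, via $x_g-x_{g'}=x_{g'}(x_{g-g'}-1)$ for $g-g'\in G_m$, you correctly flag as the one point needing care). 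Both arguments ultimately turn on the elements $x_g-1$, but yours avoids any grading considerations and is more self-contained and elementary, while the paper's is shorter, stays within the graded framework of the rest of the paper, and directly produces a single non-finitely-generated ideal rather than an infinite chain.
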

\begin{proof}The if direction is obvious since in the finitely generated case the group algebra is the localization of a quotient of a polynomial ring over a field. Now suppose $G$ is not finitely generated. Consider any finitely generated ideal $I$. The generators live in finitely many degrees. Let H be the subgroup generated by these degrees, then $I$ must be $G/H$-graded for some finitely generated $H$.
Now consider the map $\pi: k \to k_0, \Sigma a_ig_i \to \Sigma a_i$. The kernel $J$ is an ideal in $k_0[G]$ generated by $(e_g-1)_{g \in G}$. If $H$ is a subgroup such that $J$ is $G/H$-graded then we must have $G=H$. Thus $J$ cannot be finitely generated.
\end{proof}

\section{G-graded irreducibility implies irreducibility}
In this section we prove our main result, that is, a graded irreducible submodule of a graded module is irreducible.
\begin{definition}Let $(R,\mathfrak{m},k)$ be a local ring or a graded-local ring. Let $M$ be an $R$-module. The socle of $M$, denoted by $soc(M)$, is $(0:_{M}\mathfrak{m})$. When $M$ is graded, $soc(M)$ is also graded. In both cases $soc(M)$ is a free k-module.
\end{definition}
\begin{lemma}Let $R$ be a Noetherian ring, $M$ a finitely generated $R$-module, and $N \subset M$ a submodule. Suppose:(1)$(R,\mathfrak{m},k)$ is local and $M/N$ is Artinian or (2)$R,M$ and $N$ are all $G$-graded, $(R,\mathfrak{m},k)$ is graded-local, $M/N$ is $\mathfrak{m}$-primary. Then $N \subset M$ is irreducible (resp. graded-irreducible) if and only if $soc(M/N)$ has rank 1.
\end{lemma}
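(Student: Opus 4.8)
The plan is to reduce at once to the case $N = 0$. By the observation recorded after Definition 2.2, $N \subset M$ is irreducible (resp. graded-irreducible) if and only if $0 \subset M/N$ is, and $soc(M/N)$ depends only on $M/N$; so, replacing $M$ by $M/N$, it suffices to show that $0 \subset M$ is irreducible (resp. graded-irreducible) if and only if $soc(M)$ has rank $1$, where now $M \neq 0$ (the case $N = M$ being excluded), and where in case (1) $M$ is Artinian over the local ring $(R,\mathfrak{m},k)$, while in case (2) $M$ is $\mathfrak{m}$-primary over the graded-local ring $(R,\mathfrak{m},k)$. In either case $soc(M) = (0 :_M \mathfrak{m})$ is a module over $k = R/\mathfrak{m}$, which is a field in case (1) and a graded field in case (2); by Theorem 3.3 it is free over $k$, so its rank is well defined, and by Remark 2.8 $k$ is a domain, so after tensoring with $\mathrm{Frac}(k)$ the rank is additive on direct sums and non-increasing on passage to submodules.

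The key structural input is that $soc(M)$ is an essential (graded) submodule of $M$: every nonzero (graded) submodule $L \subseteq M$ satisfies $soc(L) = L \cap soc(M) \neq 0$. In case (1), a finitely generated Artinian module over a Noetherian ring has finite length, and the socle of a module of finite length is well known to be essential. In case (2) this requires more care, because $M$ need not have finite length; here one argues with associated primes instead: $\emptyset \neq Ass(L) \subseteq Ass(M) = \{\mathfrak{m}\}$, so $\mathfrak{m} \in Ass(L)$, and by Property 2.4(2) $\mathfrak{m}$ is the annihilator of some nonzero homogeneous element $x \in L$, whence $0 \neq x \in (0 :_L \mathfrak{m}) = soc(L)$. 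Taking $L = M$, this also shows $soc(M) \neq 0$, i.e. $\mathrm{rank}\, soc(M) \geq 1$.

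Given this, the equivalence follows. If $\mathrm{rank}\, soc(M) \geq 2$, choose $k$-linearly independent $x_1, x_2 \in soc(M)$, taken homogeneous in case (2) by Theorem 3.3; since $\mathfrak{m}$ annihilates each $x_i$ we have $Rx_i = kx_i$, a (graded) submodule contained in $soc(M)$, and $kx_1 \cap kx_2 = 0$ inside the free $k$-module $soc(M)$, so $0$ is (graded-)reducible. Conversely, suppose $0 = N_1 \cap N_2$ with $N_1, N_2$ nonzero (graded) submodules of $M$. By essentiality, $soc(N_1)$ and $soc(N_2)$ are nonzero (graded) $k$-submodules of $soc(M)$ with $soc(N_1) \cap soc(N_2) \subseteq N_1 \cap N_2 = 0$, so the natural map $soc(N_1) \oplus soc(N_2) \to soc(M)$ is injective and $\mathrm{rank}\, soc(M) \geq \mathrm{rank}\, soc(N_1) + \mathrm{rank}\, soc(N_2) \geq 2$. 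Hence $0$ is (graded-)irreducible exactly when $\mathrm{rank}\, soc(M) \leq 1$, which combined with the lower bound above forces the rank to be exactly $1$.

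I expect the essentiality of the socle in case (2) to be the main obstacle, precisely because finite length is unavailable there; the associated-prime argument via Property 2.4(2) is the substitute, and it is also the place where the torsionfree hypothesis genuinely enters, since it is what guarantees that associated primes of a graded module are annihilators of homogeneous elements. A secondary point is to keep all the dimension bookkeeping over the graded field $k$ — invoking Theorem 3.3 for freeness and Remark 2.8 for the domain property — rather than quoting ordinary vector-space facts directly.
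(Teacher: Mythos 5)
Your proof is correct and follows essentially the same route as the paper: reduce to $N=0$, observe that two independent (homogeneous) socle elements give a nontrivial decomposition, and for the converse use that every nonzero (graded) submodule $L$ has $Ass(L)=\{\mathfrak{m}\}$, hence nonzero socle, together with freeness of the socle over the (graded) field $k$. The only cosmetic differences are that you phrase the rank-one direction contrapositively as a rank count (where the paper invokes that $0$ is irreducible in the domain $k$) and treat case (1) by finite length, whereas the paper runs the same associated-prime argument in both cases.
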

\begin{proof}
We may assume $N=0$ after replacing $M$ with $M/N$. Now suppose the rank of $soc(M)$ is at least two. Then there exist a k-basis of $soc(M)$. Take the first two basis element: $e_1,e_2 \in soc(M) \subset M$ then $Re_1 \cap Re_2 =0$. So 0 is not irreducible. Now suppose the rank of $soc(M)$ is one, say, $soc(M)$ is $Re$ $\cong k$ as an R-module. Then for any $N_1,N_2 \in M$, $Ass (N_1) = Ass (N_2) = Ass (M) = \{\mathfrak{m}\}$ because $Ass (M)$ consists of one prime and $Ass(N_1),Ass(N_2)$ are nonempty subsets of $Ass (M)$. Now $soc(N_1) \neq 0,soc(N_2) \neq 0$ so we can take nonzero $e_1 \in soc(N_1), e_2 \in soc(N_2)$. They must all lie in $soc(M)= Re$. Now $k$ is a domain, hence 0 is an irreducible $k$-submodule of $k$, hence 0 is an irreducible $R$-submodule of the $R$-module $k$. So $Re_1 \cap Re_2 \neq 0$ in $Re \cong k$. So 0 is irreducible. In the graded case, just take all the modules to be graded and elements to be homogeneous.
\end{proof}
\begin{theorem}Let $R$ be $G$-graded, $M$ be a graded $R$-module, $N$ be a graded primary submodule of $M$, Then $N$ is graded irreducible if and only if $N$ is irreducible.
\end{theorem}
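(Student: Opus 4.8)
The plan is to reduce the statement to the graded-local setting, apply Lemma 4.2 in both its ungraded and graded forms, and then reconcile the two socles that come out. One implication is free: every $G$-graded submodule is in particular a submodule, so if $N$ is irreducible it is certainly graded-irreducible (this needs nothing about $N$ being primary). For the substantive direction — which will in fact give the equivalence in one stroke — I would first pass from $N\subset M$ to $0\subset M/N$, so that we may assume $N=0$; then $M$ is a finitely generated graded module whose unique associated prime $\mathfrak p$ is graded by Property 2.4(2).

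Next, localize at $\mathfrak p$. By Lemma 2.10, ``$0$ is graded-irreducible in $M$'' is equivalent to ``$0$ is graded-irreducible in the homogeneous localization $M_{(\mathfrak p)}$'', while ``$0$ is irreducible in $M$'' is equivalent to ``$0$ is irreducible in the ordinary localization $M_{\mathfrak p}$''. Here $R_{(\mathfrak p)}$ is graded-local with graded maximal ideal $\mathfrak m=\mathfrak p R_{(\mathfrak p)}$ and graded residue field $k=R_{(\mathfrak p)}/\mathfrak m$; moreover $Ass(M_{(\mathfrak p)})=\{\mathfrak m\}$, so $M_{(\mathfrak p)}$ is $\mathfrak m$-primary and Lemma 4.2(2) applies, giving: $0$ is graded-irreducible in $M_{(\mathfrak p)}$ iff $soc(M_{(\mathfrak p)})$ is free of rank $1$ over $k$. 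On the other side, $R_{\mathfrak p}=(R_{(\mathfrak p)})_{\mathfrak m}$ is a Noetherian local ring with residue field $\kappa:=\mathrm{Frac}(k)$, and $M_{\mathfrak p}=(M_{(\mathfrak p)})_{\mathfrak m}$ is finitely generated with support equal to the maximal ideal, hence of finite length; so Lemma 4.2(1) applies, giving: $0$ is irreducible in $M_{\mathfrak p}$ iff $soc(M_{\mathfrak p})$ has dimension $1$ over $\kappa$.

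The remaining task, and the one I expect to carry the real content, is to compare $soc(M_{(\mathfrak p)})$ with $soc(M_{\mathfrak p})$. Writing $A=R_{(\mathfrak p)}$ and $B=A_{\mathfrak m}=R_{\mathfrak p}$, and using that $\mathfrak m$ is finitely generated together with exactness of localization, one gets $soc(M_{\mathfrak p})=soc(M_{(\mathfrak p)})\otimes_A B$. Now $S:=soc(M_{(\mathfrak p)})$ is annihilated by $\mathfrak m$, hence is a graded $k$-module, hence free over $k$ by Theorem 3.3, of some finite rank $r$ (finiteness because $M_{(\mathfrak p)}$ is Noetherian, so $S$ is finitely generated); and $S\otimes_A B=S\otimes_k(B/\mathfrak m B)=S\otimes_k\kappa$ is a $\kappa$-vector space of dimension $r$. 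Therefore $soc(M_{(\mathfrak p)})$ has rank $1$ over $k$ exactly when $soc(M_{\mathfrak p})$ has dimension $1$ over $\kappa$. Chaining the equivalences of the previous paragraph then yields that $0$ is graded-irreducible in $M$ if and only if $0$ is irreducible in $M$, which is the claim.

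The only points needing routine verification are the bookkeeping facts used above: that $R_{\mathfrak p}=(R_{(\mathfrak p)})_{\mathfrak m}$ and $M_{\mathfrak p}=(M_{(\mathfrak p)})_{\mathfrak m}$, that $Ass(M_{(\mathfrak p)})=\{\mathfrak m\}$ (so Lemma 4.2(2) applies), and that $M_{\mathfrak p}$ has finite length (so Lemma 4.2(1) applies) — all immediate from $M$ being finitely generated with $Ass(M)$ a single prime. Everything else is an application of Lemmas 2.10 and 4.2 and of Theorem 3.3.
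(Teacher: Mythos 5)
Your proposal is correct and follows essentially the same route as the paper's proof: reduce to $N=0$, transfer irreducibility (graded and ungraded) to the localizations $M_{(\mathfrak p)}$ and $M_{\mathfrak p}$ via Lemma 2.10, apply the socle criterion of Lemma 4.2 in both settings, and compare the two socles using that $\mathfrak p$ is finitely generated so that forming the socle commutes with localization. The only difference is presentational: you make the rank bookkeeping explicit (freeness of the socle over the graded field via Theorem 3.3 and base change to $\kappa=\mathrm{Frac}(k)$) and phrase the argument as a chain of equivalences, where the paper states the rank-one isomorphisms directly.
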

\begin{proof}We know that $M/N$ has a unique associated prime, denoted by $\mathfrak{p}$, and it's graded under assumption (2). Also, we may assume $N=0$ after replacing $M$ with $M/N$. The "if" direction is trivial. Now let 0 be graded irreducible in $M$. Then 0 is an $R_{(\mathfrak{p})}$-submodule in $M_{(\mathfrak{p})}$ which is graded irreducible by Lemma 2.10. Then $soc(M_{(\mathfrak{p})}) \cong R_{(\mathfrak{p})}/\mathfrak{p}R_{(\mathfrak{p})}$. Now $(0 :_{M_{(\mathfrak{p})}} \mathfrak{m}_{(\mathfrak{p})})_\mathfrak{p}=(0 :_{M_{\mathfrak{p}}} \mathfrak{m}_{\mathfrak{p}})$ because $\mathfrak{m}$ is finitely generated. So $soc(M_{\mathfrak{p}}) \cong R_{\mathfrak{p}}/{\mathfrak{p}}R_{\mathfrak{p}}$. This means 0 is irreducible in $M_{\mathfrak{p}}$. So 0 is irreducible in $M$ by Lemma 2.10.
\end{proof}
We have proved that graded-irreducibility is the same as being graded and irreducible. Now we give the following definitions  from [3] and [4], generalized to the $G$-graded case. In [3] they are called the index of irreducibility and denoted by $r_M(N)$ (resp. $r_M^g(N)$).
\begin{definition}Let $N \subset M$ be $R$-modules. Then
\begin{enumerate}
\item The index of reducibility of $N$ in $M$ is $ir_M(N)=min\{r:N=\cap_{i=1}^r N_i, N_i$ irreducible $R$-submodules of $M\}$.
\item When $M$ and $N$ are graded, the graded index of reducibility of $N$ in $M$ is $ir_M^g(N)=min\{r:N=\cap_{i=1}^r N_i, N_i$ graded-irreducible $R$-submodules of $M\}$.
\end{enumerate}
\end{definition}
When $M$ is clearly understood we simply denote them by $ir(N)$ (resp. $ir^g(N)$).
Here is the $G$-graded version of [3, Theorem 7]. The proof is identical.
\begin{theorem}Let $R$ be a $G$-graded ring, $M$ a $G$-graded module where $G$ is abelian but not necessarily torsionfree. Then the following are equivalent.
\begin{enumerate}
\item Every graded-irreducible submodule of $M$ is irreducible.

\item For every graded submodule $N$ of $M$, $ir_M(N)=ir^g_M(N)$.

\item Every graded submodule $N$ of $M$ is a finite intersection of irreducible graded submodules of $M$.
\end{enumerate}
In particular these equivalent conditions all hold if $G$ is torsionfree abelian.
\end{theorem}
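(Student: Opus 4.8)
The plan is to prove the implications $(1)\Rightarrow(2)\Rightarrow(1)$ and $(1)\Rightarrow(3)\Rightarrow(1)$, which together give the stated equivalence; the one non-formal ingredient is the classical fact (used already in defining the index of reducibility in [3] and [4]) that $ir_M(N)$ is an invariant of $M/N$, in the sense that \emph{every} irredundant irreducible decomposition of $N$ in $M$ has precisely $ir_M(N)$ components. Two standing observations will be used throughout. First, since $R$ is Noetherian and $M$ finitely generated, $M$ is a Noetherian module, so the $G$-graded submodules of $M$ satisfy the ascending chain condition; hence the usual Noetherian-induction argument (the set of graded submodules admitting no finite graded-irreducible decomposition, if nonempty, has a maximal element, which is then not graded-irreducible and so a proper intersection of two strictly larger graded submodules, each of which does admit such a decomposition — a contradiction) shows that every graded submodule of $M$ has a finite graded-irreducible decomposition, and therefore a minimal one, which is automatically irredundant. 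Second, both notions of irreducibility of $N$ in $M$ depend only on $M/N$.

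The heart of the matter is $(1)\Rightarrow(2)$. Fix a graded submodule $N$ and choose a graded-irreducible decomposition $N=L_1\cap\cdots\cap L_r$ with $r=ir^g_M(N)$; minimality of $r$ makes it irredundant. By $(1)$ each $L_i$ is irreducible, so this is an irredundant \emph{irreducible} decomposition of $N$ with $r$ components, and the invariance recalled above forces $r=ir_M(N)$. Hence $ir^g_M(N)=ir_M(N)$. The obstacle I expect to matter here is exactly that one must invoke the invariance of the index and not merely its minimality: directly one only obtains $ir_M(N)\le ir^g_M(N)$ (a graded-irreducible decomposition whose pieces are, by $(1)$, irreducible is in particular an irreducible decomposition), whereas the reverse inequality $ir^g_M(N)\le ir_M(N)$ is precisely what the invariance supplies.

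The remaining implications are routine. For $(2)\Rightarrow(1)$: if $N\subset M$ is graded-irreducible and $N\neq M$, then $ir^g_M(N)=1$, so $(2)$ gives $ir_M(N)=1$, i.e.\ $N$ is irreducible (the case $N=M$ being trivial). For $(1)\Rightarrow(3)$: by the Noetherian-induction observation every graded submodule of $M$ is a finite intersection of graded-irreducible submodules, each of which is irreducible by $(1)$, so $N$ is a finite intersection of irreducible graded submodules. For $(3)\Rightarrow(1)$: if $N$ is graded-irreducible and $N=L_1\cap\cdots\cap L_m$ with the $L_i$ irreducible and graded, write $N=L_1\cap(L_2\cap\cdots\cap L_m)$; since $L_2\cap\cdots\cap L_m$ is graded, graded-irreducibility of $N$ gives $N=L_1$ or $N=L_2\cap\cdots\cap L_m$, and induction on $m$ concludes that $N=L_i$ for some $i$, so $N$ is irreducible.

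Finally, suppose $G$ is torsionfree abelian, so that Assumption~1.1 holds and Property~2.4 is available. If $N\subset M$ is graded-irreducible, then Property~2.4(4) shows $N$ is a graded primary submodule of $M$, and Theorem~4.3 then shows $N$ is irreducible; thus $(1)$ holds, and hence so do $(2)$ and $(3)$.
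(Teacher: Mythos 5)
Your proposal is correct and takes essentially the route the paper intends: the paper gives no argument of its own here but defers to the proof of [3, Theorem 7], which runs along the same lines, and the one substantive ingredient you isolate --- Noether's classical theorem that every irredundant irreducible decomposition of a submodule of a Noetherian module has the same number of components, so that an irredundant graded-irreducible decomposition whose pieces are irreducible by (1) must have exactly $ir_M(N)$ pieces --- is precisely what drives $(1)\Rightarrow(2)$ there as well. The remaining implications and the final reduction of the torsionfree case to Property 2.4(4) and Theorem 4.3 are handled exactly as the paper envisages.
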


\begin{theorem}Let $(R,{\mathfrak{m}},k)$ be graded local, $N \subset M$ are graded $R$-modules such that $M/N$ is ${\mathfrak{m}}$-primary. Then $ir_M(N) = rank_k soc(M/N)$.
\end{theorem}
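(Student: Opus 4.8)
The plan is to reduce immediately to the case $N=0$ by replacing $M$ with $M/N$, so that the assertion becomes $ir_M(0)=rank_k\, soc(M)$ for a finitely generated graded module $M$ that is $\mathfrak{m}$-primary. Since $soc(M)=(0:_M\mathfrak{m})$ is annihilated by $\mathfrak{m}$, it is a graded $k$-module, hence free over the graded field $k$ by Theorem 3.3; write $s=rank_k\, soc(M)$ and fix a homogeneous $k$-basis $e_1,\dots,e_s$ of it. I would also record two easy preliminary facts. First, for every nonzero submodule $L\subseteq M$ the set $Ass(L)$ is nonempty and contained in $Ass(M)=\{\mathfrak{m}\}$, so $soc(L)=L\cap soc(M)\neq 0$; in particular a submodule of $M$ is zero if and only if it meets $soc(M)$ trivially. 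Second, since $\sqrt{Ann(M)}=\mathfrak{m}$, the support of $M$ consists exactly of the primes containing $\mathfrak{m}$, so any graded prime associated to a proper graded quotient of $M$ must contain $\mathfrak{m}$ and, being graded, must equal the graded maximal ideal $\mathfrak{m}$; hence every proper graded primary submodule of $M$ is $\mathfrak{m}$-primary, which is exactly what is needed to apply Property 2.4(4), Theorem 4.3 and Lemma 4.2(2) to such submodules.

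For the bound $ir_M(0)\le s$ I would construct an explicit decomposition. For each $i$ put $W_i=\sum_{j\ne i}k e_j$; this is a graded $R$-submodule of $M$ (it is killed by $\mathfrak{m}$) and does not contain $e_i$. By Noetherianity choose a graded submodule $N_i\supseteq W_i$ maximal with respect to $e_i\notin N_i$. If $N_i=A\cap B$ with $A,B$ graded and both strictly larger than $N_i$, then $A\supseteq W_i$ together with $A\supsetneq N_i$ forces $e_i\in A$ by maximality, and likewise $e_i\in B$, so $e_i\in A\cap B=N_i$, a contradiction; thus each $N_i$ is graded-irreducible, hence irreducible by Property 2.4(4) and Theorem 4.3. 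A short computation using that nonzero homogeneous elements of $k$ are units shows $N_i\cap soc(M)=W_i$, so $\bigl(\bigcap_{i=1}^s N_i\bigr)\cap soc(M)=\bigcap_{i=1}^s W_i=0$, and by the first preliminary fact $\bigcap_{i=1}^s N_i=0$. Hence $ir_M(0)\le s$.

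For the reverse bound $ir_M(0)\ge s$ I would invoke Theorem 4.5 to obtain a decomposition $0=\bigcap_{i=1}^r N_i$ with all $N_i$ graded-irreducible and $r=ir_M(0)$. By the second preliminary fact each $N_i$ is $\mathfrak{m}$-primary, so Lemma 4.2(2) shows $soc(M/N_i)$ is a free $k$-module of rank $1$. The canonical map $M\hookrightarrow\bigoplus_{i=1}^r M/N_i$ is injective (its kernel is $\bigcap N_i=0$) and graded, and it carries $soc(M)$ into $\bigoplus_{i=1}^r soc(M/N_i)$, a free $k$-module of rank $r$. Since $k$ is a domain (Remark 2.8), tensoring this injection with the fraction field of $k$ shows that an injection of finite free $k$-modules cannot raise the rank, so $s\le r=ir_M(0)$. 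Combining the two inequalities yields $ir_M(0)=s=rank_k\, soc(M)$.

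The one genuinely delicate point is the second preliminary fact: over a graded-local ring the graded maximal ideal need not be a maximal ideal, and $M$ being $\mathfrak{m}$-primary does not make it of finite length, so one must argue via supports that no proper graded quotient of $M$ can have an associated graded prime other than $\mathfrak{m}$. Without this one cannot legitimately apply Lemma 4.2(2) to the $N_i$ in the $(\ge)$ direction, nor Property 2.4(4) and Theorem 4.3 in the $(\le)$ direction; everything else is a routine graded adaptation of the classical socle argument for the index of reducibility.
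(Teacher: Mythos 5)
Your proof is correct, but it takes a genuinely different route from the paper's. The paper's argument is a short reduction to the classical ungraded case: since $\mathfrak{m}$ is an honest prime ideal (Remark 2.8), it localizes at $\mathfrak{m}$, uses the correspondence between irreducible decompositions of the $\mathfrak{m}$-primary submodule $N$ in $M$ and of $N_{\mathfrak{m}}$ in $M_{\mathfrak{m}}$ (in the spirit of Lemma 2.10) together with the standard local fact $ir_{M_{\mathfrak{m}}}(N_{\mathfrak{m}})=rank\, soc(M_{\mathfrak{m}}/N_{\mathfrak{m}})$, and then notes that the socle rank is unchanged by localization, because $soc(M/N)$ is a free module over the graded field $k$ and localizing amounts to tensoring with the fraction field of the domain $k$ (using that $\mathfrak{m}$ is finitely generated so socle formation commutes with localization). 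You instead re-prove the equality entirely inside the graded category: the upper bound $ir_M(0)\le rank_k\,soc(M)$ via an explicit maximal-avoidance construction of graded-irreducible submodules separating a homogeneous socle basis, converted into irreducible submodules by Property 2.4(4) and Theorem 4.3; and the lower bound via Theorem 4.5 (to get a graded decomposition of minimal length), your support argument forcing each component to be $\mathfrak{m}$-primary, Lemma 4.2(2), and the rank comparison through the embedding $soc(M)\hookrightarrow\bigoplus_i soc(M/N_i)$. Your version is longer but self-contained: it does not rely on the unproved ungraded equality over $R_{\mathfrak{m}}$ nor on passing whole decompositions (rather than a single primary irreducible submodule, which is all Lemma 2.10 states) through localization, and it correctly isolates the genuinely graded subtlety that $\mathfrak{m}$ need not be a maximal ideal, so one must argue via $Supp(M/N)=V(\mathfrak{m})$ that the graded associated primes of the components are forced to equal $\mathfrak{m}$. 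One small correction: that $\mathfrak{m}$-primariness is only needed to apply Lemma 4.2(2) in the lower bound; Property 2.4(4) and Theorem 4.3, which you use in the upper bound, apply to any graded-irreducible submodule and do not require it.
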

\begin{proof}Localizing at ${\mathfrak{m}}$. We have $ir_M(N) = rank_{k_{\mathfrak{m}}} soc(M/N)_{\mathfrak{m}}$. Notice that the rank of $soc(M/N)$ will not change after localizing.
\end{proof}

\section{The relation between the index of reducibility of $I$ and $I^*$}
Let $R$ be a graded ring and $I$ be an ideal of $R$ which is not necessarily graded. We want to compare $ir_R(I)$ and $ir_R(I^*)$. Let's consider a special case: $G=\mathbb{Z}$ and $R$ be the coordinate ring of a cone $C$ in an affine variety $\mathbb{A}^n$, then $R$ is $G$-graded. In this case the operation $I \to I^*$ has a geometric interpretation. Suppose $I$ is a radical ideal corresponding to a closed subset $X$ in $C$, and $X$ is not supported at the origin. That is, $X=V(I)$ is the vanishing set of $I$. There is a natural projection $\pi: \mathbb{A}^n-\{(0)\} \to \mathbb{P}^{n-1}$. $\pi$ restricts to two maps: $C-\{0\} \to \mathbb{P}^{n-1}$ and $X-\{0\} \to \mathbb{P}^{n-1}$. Then $I^*$ is a radical ideal, and its vanishing set is $\overline{\pi(X-\{0\})}$ in $\mathbb{P}^{n-1}$, because the maximal homogeneous ideal in $I$ corresponds to the minimal closed subset containing $\pi(X-\{0\})$.

For a morphism $f:Y \to Y'$ between varieties, if $Z \subset Y$ is an irreducible closed subset, then $\overline{f(Z)}$ is also irreducible. So if all the irreducible components are reflected in the variety as a set, $ir_R(I)$ should be greater or equal to $ir_R(I^*)$, because every irreducible component of $I$ or $V(I)$  map to an irreducible subset contained in $V(I^*)$. The equality holds if and only if different irreducible components do not collapse to contain each other. Let $Min(J)$ denote the minimal prime over an ideal $J$, and $|S|$ denote the cardinality of a set $S$. We have the following theorem:
\begin{theorem}Let $R$ be a $G$-graded ring where $G$ is torsionfree abelian. Let $I$ be an $R$-ideal which is radical but not $G$-graded. Then $ir(I)\geq ir(I^*)$. The equality holds if and only if the *-map induces a bijection between $Min (I)$ and $Min (I^*)$.
\end{theorem}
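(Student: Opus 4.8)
The plan is to reduce the theorem to elementary bookkeeping with minimal primes, after isolating two facts. \emph{Fact A: for a radical ideal $J$ of $R$ one has $ir(J)=|Min(J)|$.} The decomposition $J=\bigcap_{\mathfrak{p}\in Min(J)}\mathfrak{p}$ writes $J$ as an intersection of primes, and prime ideals are irreducible, so $ir(J)\leq|Min(J)|$. Conversely, in any irreducible decomposition $J=\bigcap_{j=1}^{r}Q_{j}$ each $Q_{j}$ is primary (over a Noetherian ring, irreducibility implies primary), so $J=\sqrt{J}=\bigcap_{j=1}^{r}\sqrt{Q_{j}}$ is a finite intersection of primes; hence $Min(J)\subseteq\{\sqrt{Q_{1}},\dots,\sqrt{Q_{r}}\}$ and $|Min(J)|\leq r$. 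Minimizing over all decompositions gives the reverse inequality. I would apply this immediately to $I$.

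\emph{Fact B: $I^{*}=\bigcap_{\mathfrak{p}\in Min(I)}\mathfrak{p}^{*}$.} For the inclusion $\subseteq$: $I^{*}$ is generated by the homogeneous elements of $I=\bigcap_{\mathfrak{p}}\mathfrak{p}$, and such an element lies in every $\mathfrak{p}$ and, being homogeneous, in every $\mathfrak{p}^{*}$. For $\supseteq$: the ideal $\bigcap_{\mathfrak{p}}\mathfrak{p}^{*}$ is graded, hence generated by its homogeneous elements, so it suffices to see that a homogeneous $y$ in it lies in $I^{*}$; but $y\in\bigcap_{\mathfrak{p}}\mathfrak{p}^{*}\subseteq\bigcap_{\mathfrak{p}}\mathfrak{p}=I$ and $y$ is homogeneous, so $y\in I^{*}$. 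By Remark 2.8 each $\mathfrak{p}^{*}$ is a graded prime, so Fact B presents $I^{*}$ as a finite intersection of primes; consequently $I^{*}$ is radical, Fact A yields $ir(I^{*})=|Min(I^{*})|$, and $Min(I^{*})$ is exactly the set $S_{\min}$ of inclusion-minimal elements of $S:=\{\mathfrak{p}^{*}:\mathfrak{p}\in Min(I)\}$.

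The conclusion is then formal. The surjection $Min(I)\to S$, $\mathfrak{p}\mapsto\mathfrak{p}^{*}$, gives $ir(I)=|Min(I)|\geq|S|$, and $Min(I^{*})=S_{\min}\subseteq S$ gives $|S|\geq|Min(I^{*})|=ir(I^{*})$, whence $ir(I)\geq ir(I^{*})$. Equality holds iff both of these are equalities, i.e. iff (i) $\mathfrak{p}\mapsto\mathfrak{p}^{*}$ is injective on $Min(I)$ and (ii) $S=S_{\min}$, i.e. every $\mathfrak{p}^{*}$ with $\mathfrak{p}\in Min(I)$ is already minimal over $I^{*}$. Condition (ii) is precisely the well-definedness of $\mathfrak{p}\mapsto\mathfrak{p}^{*}$ as a map $Min(I)\to Min(I^{*})$ (its surjectivity being automatic from $Min(I^{*})\subseteq S$), and (i) its injectivity; so (i) together with (ii) says exactly that the $*$-map induces a bijection $Min(I)\to Min(I^{*})$, and conversely such a bijection forces both equalities. (The hypothesis that $I$ is not graded is never used; it only serves to make the statement nontrivial, since $I^{*}=I$ when $I$ is graded.)

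I do not anticipate a genuine obstacle. The one step needing care is Fact B, and in particular the assertion that $I^{*}$ is radical: this is exactly where torsionfreeness of $G$ is used, via Remark 2.8 (equivalently Property 2.4(1)), which guarantees each $\mathfrak{p}^{*}$ is prime. Without torsionfreeness $I^{*}$ need not be radical, the identity $ir(I^{*})=|Min(I^{*})|$ breaks, and the whole argument collapses. Everything else is routine manipulation of the finite poset of minimal primes.
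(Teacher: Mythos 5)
Your proof is correct and follows essentially the same route as the paper: reduce to counting minimal primes via $ir(\text{radical})=|Min|$, use that $*$ commutes with intersections and that each $\mathfrak{p}^*$ is prime by torsionfreeness, then compare $Min(I)$ with the minimal elements of $\{\mathfrak{p}^*:\mathfrak{p}\in Min(I)\}$. The only difference is that you supply explicit proofs of the two facts (your Fact A and Fact B) that the paper simply asserts, which makes the argument more self-contained but not substantively different.
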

\begin{proof}The irreducible decomposition of I is $I= \cap (p_i)_{i \in Min(I)}$ and this decomposition is irredundant. So $ir(I)=|Min(I)|$. Now taking star commutes with intersection, so $I^*= \cap (p_i^*)_{i \in Min(I)}$. Note that $I^*$ is also radical so $ir(I^*)=|Min(I^*)|$. After deleting some $p_i^*$ it becomes an irredundant irreducible decomposition. So $ir(I) \geq ir(I^*)$. The equality holds if and only if no prime ideal is deleted, so those $p_i^*$ are just all the minimal primes of $I^*$, so the *-map induces a bijection between $Min (I)$ and $Min (I^*)$.
\end{proof}
In general there is no control on the difference between $ir(I)$ and $ir(I^*)$.
\begin{example}Let $R=k[x,y]$ which is $\mathbb{Z}$-graded. Let $I= (x,(y-a_1)(y-a_2)\cdots(y-a_r))$ where $a_1,a_2,...,a_r$ are pairwise distinct and all nonzero elements in $k$. Then $I^*=(x)$. In this case we see that $I$ has r components which collapse to become one component of $I^*$.
\end{example}

In general, there is no fixed inequality between $ir(I)$ and $ir(I^*)$. Here are two examples where $I$ and $I^*$ are both ${\mathfrak{m}}$-primary for a graded prime ideal ${\mathfrak{m}}$.
\begin{example}Let $R=k[x,y]$ be $G=\mathbb{Z}$ graded where $k$ is a field. Let $I=(x^2,xy,y^3,x-y^2)$, then $I^*=(x^2,xy,y^3)$. They are both $m=(x,y)$-primary. $R/m=k$. $soc(I)=kx,soc(I^*)=kx+ky^2$. So $ir(I)=1<2=ir(I^*)$.
\end{example}
\begin{example}Let $R,m,G$ be as above, $I=(x^4,x^2y^2,y^4,x^3y-y^3x)$. $I^*=(x^4,x^2y^2,y^4)$. They are still $m$-primary. Now $soc(I^*)=kx^3y+kxy^3, soc(I)=kx^3y+k(x^3-xy^2)+k(x^2y-y^3)$. So $ir(I)=3>2=ir(I^*)$.
\end{example}

\section*{Acknowledgements}
The author would like to thank Professor Giulio Caviglia for introducing this problem and showing a proof in the $\mathbb{Z}$-graded case. The author would like to thank Professor William Heinzer for reading an early draft.

\end{document}